\documentclass{amsart}

\usepackage{amsmath, amssymb, amsthm}
\usepackage[all]{xy}

\newtheorem{thm}{Theorem}[section]
\newtheorem{prop}[thm]{Proposition}
\newtheorem{lem}[thm]{Lemma}
\newtheorem{cor}[thm]{Corollary}

\DeclareMathOperator{\kernel}{Ker}

\DeclareMathOperator{\Aut}{Aut}

\begin{document}

\title[Loewy lengths of centers]{Some studies on Loewy lengths \\ of centers of $p$-blocks}
\author[Y. Otokita]{Yoshihiro Otokita}
\address{Yoshihiro Otokita: \newline
Department of Mathematics and Informatics, \newline
Graduate School of Science, \newline
Chiba University, \newline
1--33 Yayoi-Cho, Inage-Ku, \newline
Chiba--Shi, 263--8522, Japan.}
\email{otokita@chiba-u.jp}

\maketitle

\begin{abstract}
In this paper, we investigate some relations between the Loewy lengths of the centers of blocks of group algebras and its defect groups. In particular, we give a new upper bound of the Loewy length and determine the structure of blocks with large Loewy length. 
\end{abstract}

\section{Introduction}
Let $p$ be a prime, $G$ a finite group and $(K, \mathcal{O}, F)$ a splitting $p$-modular system for $G$ where $\mathcal{O}$ is a complete discrete valuation ring with quotient field $K$ of characteristic $0$ and residue field $F$ of characteristic $p$. Passman has shown an upper bound for the Loewy length $LL({\bf Z}FG)$ of the center of group algebra $FG$. 

\begin{thm} [Passman \cite{P}]\label{Thm1.1} 
\[ LL({\bf Z}FG) \le (p^{a+1}-1)/(p-1) \]
where $|G|=p^{a}m, p \nmid m$.
\end{thm}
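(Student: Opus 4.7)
The plan is to bound the Loewy length by a filtration argument of length $a+1$. The target bound $(p^{a+1}-1)/(p-1) = 1 + p + p^2 + \cdots + p^a$ strongly suggests one should construct subspaces $W_0 \supseteq W_1 \supseteq \cdots \supseteq W_{a+1} = 0$ of $\mathbf{Z}FG$ such that $J(\mathbf{Z}FG)^{p^i} \cdot W_i \subseteq W_{i+1}$ for each $0 \le i \le a$, since summing the radical powers needed to traverse the filtration yields exactly $1 + p + \cdots + p^a$.

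A natural choice is a filtration by a $p$-power invariant of conjugacy classes: let $W_i$ be the $F$-span of class sums $K_C = \sum_{g \in C} g$ for which a $p$-invariant of $C$ (such as the order of the $p$-part of a representative, or the $p$-part of $|C_G(g)|$) is sufficiently large, giving a chain of length $a+1$. The base case $J(\mathbf{Z}FG) \subseteq W_1$ should follow from Osima's theorem, which expresses block idempotents as $F$-linear combinations of class sums of $p$-regular elements; together with the decomposition $\mathbf{Z}FG = \bigoplus_B F \cdot e_B \oplus J(\mathbf{Z}FG)$, this places the semisimple quotient $\mathbf{Z}FG/J(\mathbf{Z}FG)$ into a direct complement of $W_1$.

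The technical heart is the inductive step $J^{p^i} W_i \subseteq W_{i+1}$. Here one exploits that $\mathbf{Z}FG$ is commutative in characteristic $p$, so the Frobenius $x \mapsto x^p$ is $F$-linear and iterated $p^i$-th powers decompose cleanly on the class-sum basis. Combining this with Osima-type $p$-adic divisibility of the class multiplication constants should give that $p^i$ successive multiplications by elements of $J$ advance each $W_i$ into $W_{i+1}$. Summing over $i$ then yields
$$ J(\mathbf{Z}FG)^{1 + p + p^2 + \cdots + p^a} \subseteq W_{a+1} = 0, $$
which is Passman's bound.

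The principal obstacle is verifying the inductive step: the filtration is coarse (descending one layer costs $p^i$ rather than one factor of $J$), so direct class-sum multiplication is insufficient and one must combine Frobenius additivity with careful $p$-adic bookkeeping of class multiplication constants. Selecting the correct filtration invariant is itself subtle, since different natural choices propagate differently through products. A potentially cleaner alternative is to reduce to the case $G = P$ a $p$-group via a Brauer correspondence, and then verify the bound there by a Jennings-style lower-central-series argument on $FP$.
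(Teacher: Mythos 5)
There is a genuine gap: the two load-bearing steps of your plan are both unestablished, and the first of them is in fact false for the filtration you sketch. Take the natural candidate $W_1 = $ the span of class sums of $p$-singular classes. Then $\dim W_1 = k - l$, where $k$ is the number of conjugacy classes and $l$ the number of $p$-regular classes (equivalently, of irreducible Brauer characters), while $\dim \mathbf{J}(\mathbf{Z}FG) = k - (\text{number of blocks})$. Since every block contains at least one simple module, $\dim \mathbf{J}(\mathbf{Z}FG) \ge \dim W_1$, with strict inequality as soon as some block has two or more simple modules (e.g.\ $G=S_3$, $p=3$); so the base case $\mathbf{J}(\mathbf{Z}FG) \subseteq W_1$ cannot hold in general. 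Even in the local case $G=C_p$ it fails: $\mathbf{J}(FC_p)$ is spanned by the elements $g^i-1$, none of which is supported only on non-identity elements. Osima's theorem locates the block idempotents inside the span of $p$-regular class sums, but knowing that a complement of $\mathbf{J}$ lies in $Z_{p'}$ does not force $\mathbf{J}$ into a complement of $Z_{p'}$. The inductive step $\mathbf{J}^{p^i}W_i \subseteq W_{i+1}$ is likewise only asserted ("should give"), and you yourself flag it as the principal obstacle; as it stands there is no argument to check. The closing alternative --- reduce to a Sylow $p$-subgroup "via a Brauer correspondence" and apply Jennings --- is also not a proof: no such reduction for $LL(\mathbf{Z}FG)$ is exhibited, and the numerology would not match ($LL(FP) \le p^a$ for $|P|=p^a$, whereas the claimed bound is $1+p+\cdots+p^a$).

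For comparison, Passman's actual argument (which the paper does not reproduce for Theorem~\ref{Thm1.1}, but whose mechanism is exactly what Section~3 runs for Theorem~\ref{Thm1.3}) replaces your filtration by class-sum supports with an induction over $p$-sections. One shows that a suitable power of $\mathbf{J}(\mathbf{Z}FG)$ lands in $Z_{p'}$, the span of $p$-regular \emph{section} sums (not class sums), which lies in the socle of $FG$ and is therefore killed by one more application of $\mathbf{J}(\mathbf{Z}FG)$. The required power is controlled recursively through the Brauer homomorphism $\sigma_x$ and the quotient map $\tau_x : FC_G(x) \to F[C_G(x)/\langle x\rangle]$, using $\kernel \tau_x = (x-1)FC_G(x)$ and $(x-1)^{|x|-1} = 1 + x + \cdots + x^{|x|-1}$, with the induction running over the smaller groups $C_G(x)/\langle x\rangle$; the sum $1+p+\cdots+p^a$ emerges from unwinding this recursion, not from a descending chain of subspaces of $\mathbf{Z}FG$. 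If you want to salvage your approach, the honest way is to abandon the class-sum filtration and set up that induction.
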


In the following, let $B$ be a block of $FG$ with a defect group $D$. Okuyama has generalized Theorem \ref{Thm1.1} for the center ${\bf Z}B$ of $B$.

\begin{thm} [Okuyama \cite{O}]\label{Thm1.2} 
\[ LL({\bf Z}B) \le |D| \]
with equality if and only if $D$ is cyclic and $B$ is nilpotent.
\end{thm}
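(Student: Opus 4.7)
My plan splits naturally into first establishing the inequality $LL(\mathbf{Z}B) \le |D|$ and then characterizing the equality case. For the bound, the strategy is to reduce to a block with normal defect group via the Brauer correspondence. Let $b$ denote the Brauer correspondent of $B$ in $H = N_G(D)$, so by Brauer's first main theorem $b$ is a block of $FH$ with defect group $D$, now normal in $H$. The Brauer homomorphism $\mathrm{Br}_D \colon (FG)^D \to FC_G(D)$ restricts to the center and sends $e_B$ to $e_b$; one argues that $\ker(\mathrm{Br}_D|_{\mathbf{Z}B}) \subseteq J(\mathbf{Z}B)$, and with more care, that this yields $LL(\mathbf{Z}B) \le LL(\mathbf{Z}b)$. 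It then remains to bound $LL(\mathbf{Z}b)$ in the normal-defect case: here $\mathbf{Z}b$ is controlled by $\mathbf{Z}FD$, a commutative local $F$-algebra of dimension $k(D) \le |D|$, whence $LL(\mathbf{Z}b) \le LL(\mathbf{Z}FD) \le |D|$.

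For the equality case, suppose $LL(\mathbf{Z}B) = |D|$. Running the above chain backwards forces both $\dim_F \mathbf{Z}FD = |D|$ and $LL(\mathbf{Z}FD) = \dim_F \mathbf{Z}FD$; the first condition forces $D$ to be abelian, and the second forces $FD$ to be uniserial, which for an abelian $p$-group holds exactly when $D$ is cyclic. With $D$ cyclic, the theory of blocks of cyclic defect (Dade) applies, and the equality $LL(\mathbf{Z}B) = |D|$ forces the inertial index $e(B)$ to be $1$, which is precisely the condition that $B$ be nilpotent in the sense of Brou\'e--Puig. Conversely, if $D$ is cyclic and $B$ is nilpotent, Puig's nilpotent-blocks theorem yields a Morita equivalence $B \sim FD$, so $\mathbf{Z}B \cong FD$ and $LL(\mathbf{Z}B) = LL(FD) = |D|$.

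The main technical obstacle is the inequality $LL(\mathbf{Z}B) \le LL(\mathbf{Z}b)$ via the Brauer homomorphism: one must verify not merely that $\ker(\mathrm{Br}_D|_{\mathbf{Z}B}) \subseteq J(\mathbf{Z}B)$, but that the kernel sits deep enough in the radical to control the Loewy length correctly. A useful alternative is to bypass the Brauer correspondent and construct an explicit filtration of $\mathbf{Z}B$ indexed by a chief series $1 = D_0 < D_1 < \cdots < D_n = D$ of $D$, using a $p$-power Frobenius argument on class sums to bound each step's contribution by $|D_{j+1}/D_j| = p$. Either route ultimately rests on the nontriviality of $\mathrm{Br}_D(e_B)$ and a structural analysis of blocks with normal defect group.
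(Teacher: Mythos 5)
The paper does not reprove this statement (it is quoted from Okuyama), but the method behind it is exactly the one reproduced in Section 3 for Theorem \ref{Thm1.3}, so your attempt can be measured against that. Your first route has a genuine gap at the step you yourself flag: the inequality $LL({\bf Z}B) \le LL({\bf Z}b)$ for the Brauer correspondent $b$ in $N_{G}(D)$ does not follow from $\kernel(\mathrm{Br}_D|_{{\bf Z}B}) \subseteq {\bf JZ}B$. For an algebra homomorphism $\varphi$ with kernel in the radical, what one gets for free is that $\varphi$ maps $({\bf JZ}B)^{n}$ into $({\bf JZ}b)^{n}$; to conclude $({\bf JZ}b)^{n}\neq 0$ from $({\bf JZ}B)^{n}\neq 0$ you would need $({\bf JZ}B)^{n}\not\subseteq\kernel\varphi$, i.e.\ precisely the statement that the kernel does not swallow the deep Loewy layers. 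If instead $\varphi$ were a surjection onto ${\bf Z}b$ with nilpotent kernel, the conclusion would be $LL({\bf Z}b)\le LL({\bf Z}B)$ --- the \emph{opposite} direction. Whether $LL({\bf Z}B)\le LL({\bf Z}b)$ holds for Brauer correspondents is a delicate question (discussed in \cite{KKS}) and cannot be assumed; so the reduction to the normal-defect case, which is the entire load-bearing part of your argument, is not established. Your fallback sketch (a filtration along a chief series of $D$ with a $p$-power Frobenius argument on class sums) is Passman's method and, as stated, yields the weaker bound $(p^{d+1}-1)/(p-1)$ rather than $|D|$.

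The argument that actually works, and that Section 3 of the paper adapts, avoids $N_G(D)$ entirely: one shows $({\bf JZ}FG)^{\lambda}1_{B}$ lies in the span $Z_{p'}$ of $p$-regular section sums (which is annihilated by ${\bf JZ}FG$), by applying, for each $p$-element $x$ with $\sigma_{x}(1_{B})\neq 0$, the Brauer homomorphism $\sigma_{x}$ at $\langle x\rangle$ followed by the natural map $\tau_{x}\colon FC_{G}(x)\to F[C_{G}(x)/\langle x\rangle]$; the key identities are $\kernel\tau_{x}=(x-1)FC_{G}(x)$ and $(x-1)^{|x|-1}FC_{G}(x)=(1+x+\dots+x^{|x|-1})FC_{G}(x)$, and the bound $|D|$ comes out of the induction via $\lambda_{x}(|x|-1)\le (|D|/|x|)(|x|-1)\le |D|-1$. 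Your treatment of the equality case is essentially sound in outline --- cyclicity of $D$ from uniseriality of $FD$, then $e(B)=1$ via the cyclic-defect formula of Proposition \ref{Prop2.3}, and the converse via Puig's theorem --- but it is downstream of the unproved inequality and so cannot be salvaged without repairing Step 1.
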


In this paper, we investigate some relations between $B, D$ and $LL({\bf Z}B)$. In the next section, we show some fundamental properties and some examples of them. The third section deals with a new upper bound below in terms of the order and exponent of $D$.

\begin{thm} \label{Thm1.3} Let $p^{d}$ and $p^{\varepsilon}$ be the order and exponent of $D$, respectively. Then we have
\[ LL({\bf Z}B) \le p^{d} - p^{d - \varepsilon} +1. \]
\end{thm}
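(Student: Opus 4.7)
The plan is to refine the strategy behind Okuyama's Theorem~\ref{Thm1.2} so that the exponent of $D$, not merely its order, enters the estimate. Since $\mathbf{Z}B$ is a commutative $F$-algebra, $LL(\mathbf{Z}B)$ equals the nilpotency index of the radical $J(\mathbf{Z}B)$, and the aim is to save a factor of $p^{d-\varepsilon}$ compared with the na\"ive $|D|$ bound.

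The first step I would carry out is to show that every $z\in J(\mathbf{Z}B)$ has $z^{p^{\varepsilon}}$ lying strictly deeper in the radical filtration than a generic radical element. Writing $z = \sum c_{K}\widehat{K}\cdot 1_{B}$ as a combination of class sums, the commutativity of $\mathbf{Z}B$ together with Frobenius' identity $(a+b)^{p^{\varepsilon}} = a^{p^{\varepsilon}}+b^{p^{\varepsilon}}$ reduces the question to the $p^{\varepsilon}$-th power of a single class sum. Every class $K$ with $\widehat{K}\cdot 1_{B}\ne 0$ has a defect group subconjugate to $D$, so the $p$-part of each $x\in K$ divides $p^{\varepsilon}$; using this, one identifies $\widehat{K}^{p^{\varepsilon}}\cdot 1_{B}$ with a combination of class sums supported on elements whose $p$-parts lie in the Frattini-like part of $D$, and these sit correspondingly deep in the Loewy filtration.

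The second step is a Wallace-type counting argument. Fixing a cyclic subgroup $\langle x\rangle\le D$ with $|x|=p^{\varepsilon}$, the index $[D:\langle x\rangle]=p^{d-\varepsilon}$ controls the number of nontrivial layers in a coset-based filtration of $\mathbf{Z}B$, while the per-element nilpotency bound $p^{\varepsilon}$ established in the first step controls the length of each layer; the final estimate $(p^{\varepsilon}-1)\cdot p^{d-\varepsilon}+1 = p^{d}-p^{d-\varepsilon}+1$ then follows in the same spirit as the proof of Theorem~\ref{Thm1.2}. The main obstacle is the first step: although $\widehat{K}$ is central, its summands do not commute in $FG$, so $\widehat{K}^{p^{\varepsilon}}$ is not literally $\sum_{x\in K}x^{p^{\varepsilon}}$. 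A workable route around this is to apply the Brauer homomorphism $\mathrm{Br}_{D}: \mathbf{Z}B \to \mathbf{Z}FC_{G}(D)\cdot 1_{b}$ (with $b$ the Brauer correspondent of $B$) and carry out the analysis in the target algebra, where the class sums are more tractable and a Jennings-type filtration on $FD$ can be pulled back to $\mathbf{Z}B$.
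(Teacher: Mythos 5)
Your proposal has the right arithmetic target, namely $(p^{\varepsilon}-1)p^{d-\varepsilon}+1=p^{d}-p^{d-\varepsilon}+1$, but both of its steps contain genuine gaps. In the first step, the Frobenius identity $(a+b)^{p^{\varepsilon}}=a^{p^{\varepsilon}}+b^{p^{\varepsilon}}$ does hold inside the commutative algebra ${\bf Z}B$, but it does not let you compute $\widehat{K}^{p^{\varepsilon}}$ as $\sum_{x\in K}x^{p^{\varepsilon}}$, since the group elements in the support of $\widehat{K}$ do not commute; the classical substitute is only a congruence modulo the commutator subspace $[FG,FG]$, which is not an ideal and carries no information about the Loewy filtration of ${\bf Z}B$. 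Your proposed repair --- pushing everything through the single Brauer homomorphism $\mathrm{Br}_{D}\colon {\bf Z}B\to {\bf Z}FC_{G}(D)1_{b}$ --- cannot work either: $\mathrm{Br}_{D}$ is far from injective on ${\bf Z}B$, so a nilpotency bound in the target says nothing about the nilpotency index of ${\bf J}({\bf Z}B)$ itself. In the second step, the ``coset-based filtration of ${\bf Z}B$'' indexed by $[D:\langle x\rangle]$ is never constructed, and there is no evident action of $D$ or of its cosets on ${\bf Z}B$ that would produce $p^{d-\varepsilon}$ layers each of length $p^{\varepsilon}-1$; even granting Step 1, it is not explained how the two bounds multiply. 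A further unproved assertion is that every class $K$ with $\widehat{K}1_{B}\neq 0$ consists of elements whose $p$-parts have order dividing $p^{\varepsilon}$.

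For contrast, the paper's proof uses Brauer homomorphisms $\sigma_{x}$ at \emph{every} $p$-element $x$ (not just one subgroup of maximal order), together with the natural epimorphism $\tau_{x}\colon FC_{G}(x)\to F[C_{G}(x)/\langle x\rangle]$. The key local computation is that $({\bf JZ}b_{i})^{\lambda_{x}}\subseteq \kernel\tau_{x}=(x-1)FC_{G}(x)$ and hence $({\bf JZ}b_{i})^{\lambda_{x}(|x|-1)}\subseteq (x-1)^{|x|-1}FC_{G}(x)=(1+x+\dots+x^{|x|-1})FC_{G}(x)$, which forces the coefficients of $\sigma_{x}(a)$ to be constant on $\langle x\rangle$-cosets. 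This shows $({\bf JZ}FG)^{\lambda}1_{B}$ lies in the span of $p$-regular section sums, an ideal annihilated by ${\bf JZ}FG$, giving $LL({\bf Z}B)\le\lambda+1$. The exponent then enters through Okuyama's Theorem \ref{Thm1.2} applied to the dominated blocks $\bar{b_{i}}$ of $F[C_{G}(x)/\langle x\rangle]$, whose defect groups have order $p^{d_{1}-\varepsilon_{1}}$, yielding $\lambda\le p^{d_{1}-\varepsilon_{1}}(p^{\varepsilon_{1}}-1)\le p^{d}-p^{d-\varepsilon}$. If you want to salvage your outline, the essential missing ideas are (i) the passage to the quotient $C_{G}(x)/\langle x\rangle$ and the identity for $(\kernel\tau_{x})^{|x|-1}$, and (ii) the use of $p$-regular section sums as the terminal stage of the filtration.
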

 
In the last section, we determine the structure of blocks with large Loewy length as follows
(In the following, we denote by $C_{m}$ a cyclic group of order $m$ and by $C_{m} \times C_{n}$ a direct product of two cyclic groups).

\begin{thm} \label{Thm1.4} If $LL({\bf Z}B) = |D| - 1$, then one of the following holds:
\begin{enumerate}
\item $D \simeq C_{3}$.
\item $D \simeq C_{2} \times C_{2}$ and $B$ is Morita equivalent to $FD$. 
\end{enumerate}
\end{thm}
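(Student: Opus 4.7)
The plan is to apply Theorem~\ref{Thm1.3} together with classical structural results for cyclic blocks and for blocks with Klein four defect group.

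First, substituting $LL({\bf Z}B)=|D|-1=p^{d}-1$ into Theorem~\ref{Thm1.3} gives $p^{d-\varepsilon}\le 2$, so either (i) $D$ is cyclic (that is, $d=\varepsilon$), or (ii) $p=2$ and $d=\varepsilon+1$.

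For case (i), Theorem~\ref{Thm1.2} forces $B$ to be non-nilpotent (otherwise $LL({\bf Z}B)$ would equal $|D|$), so the inertial index $e$ of $B$ satisfies $e\ge 2$. I would then invoke the standard bound $LL({\bf Z}B)\le m+1$ for a cyclic-defect block, where $m=(p^{d}-1)/e$ is the multiplicity of the Brauer tree; combined with $LL({\bf Z}B)=p^{d}-1$ and $e\ge 2$, this yields $p^{d}-1\le (p^{d}-1)/2+1$, hence $p^{d}\le 3$. Since every block with defect group $C_{2}$ is automatically nilpotent ($e\mid p-1=1$), this forces $D\simeq C_{3}$ and gives~(1).

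For case (ii) with $|D|=4$, so $D\simeq C_{2}\times C_{2}$, I would appeal to the classification of Morita equivalence classes of Klein-four blocks (Erdmann, Puig, Linckelmann): $B$ is Morita equivalent to $FD$, the principal block of $FA_{4}$, or the principal block of $FA_{5}$. A direct calculation with class sums gives $LL({\bf Z}B_{0}(FA_{4}))=LL({\bf Z}B_{0}(FA_{5}))=2$, while $LL({\bf Z}FD)=LL(FD)=3$. So only the nilpotent case attains $|D|-1=3$, establishing~(2).

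The main obstacle is case (ii) with $|D|=2^{\varepsilon+1}\ge 8$, in which Theorem~\ref{Thm1.3} becomes tight and does not on its own rule out $LL({\bf Z}B)=|D|-1$. Such a defect group has a cyclic subgroup of index~$2$, so $D$ is one of $C_{2^{\varepsilon}}\times C_{2}$ or a dihedral, generalised quaternion, semidihedral, or modular maximal-cyclic $2$-group. The nilpotent sub-case is easily excluded because $LL(FD)<|D|-1$ for every non-cyclic $D$ in this list (for instance $LL(FD)=5<7$ when $|D|=8$). For the non-nilpotent sub-cases I would invoke the detailed source-algebra classification of 2-blocks with these defect groups (due to Brauer, Olsson, Erdmann, and their successors), combined with explicit Loewy-length computations, to deduce the strict inequality $LL({\bf Z}B)<|D|-1$. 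This case-by-case verification is where I expect the bulk of the technical work to lie.
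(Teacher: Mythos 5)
Your handling of the cyclic case and of $|D|=4$ is correct and close to the paper's (the paper uses the equality $LL({\bf Z}B)=(|D|-1)/e(B)+1$ of Proposition \ref{Prop2.3} where you use the corresponding upper bound; both give $D\simeq C_{3}$, and for $D\simeq C_{2}\times C_{2}$ both rely on the Klein-four classification and the observation that only the nilpotent block has $LL({\bf Z}B)=3$). The genuine gap is exactly where you place ``the bulk of the technical work'': the subcase $p=2$, $d=\varepsilon+1$, $|D|\ge 8$. There you only list the $2$-groups with a cyclic subgroup of index $2$ and announce that you would invoke a source-algebra classification of dihedral, generalised quaternion, semidihedral and modular maximal-cyclic blocks together with explicit Loewy-length computations. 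None of this is carried out, and a complete Morita or source-algebra classification of tame blocks is not in fact available in the literature, so as written the argument does not close. (One could try to salvage your route by bounding $k(B)-l(B)$ via Brauer's character counts for these defect groups and applying Proposition \ref{Prop2.2}, but that verification still has to be done.)

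The paper avoids this case split entirely by an idea you missed: when $D$ is noncyclic, $LL({\bf Z}B)=p^{d}-1\le p^{d}-p^{d-\varepsilon}+1$ forces $p^{d-\varepsilon}=2$, so the bound of Theorem \ref{Thm1.3} is attained \emph{with equality}, and Corollary \ref{Cor3.1} (the equality analysis of the proof of Theorem \ref{Thm1.3}) then yields $D\simeq C_{2^{\varepsilon}}\times C_{2}$; in particular $D$ is abelian, and the nonabelian groups on your list never occur. Proposition \ref{Prop2.5} (resting on the Eaton--Kessar--K\"ulshammer--Sambale classification of $2$-blocks with abelian defect groups $C_{2^{m}}\times C_{2^{n}}$) then gives $LL({\bf Z}B)=2^{\varepsilon}+1$, or $2$, or at most $2^{\varepsilon+1}-1$ in the $m=n$ case, and comparing with $2^{d}-1$ forces $d=2$ and $B$ Morita equivalent to $FD$. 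The missing ingredient in your proposal is thus the extraction of the equality case from your own first step; with that, the heavy case-by-case analysis you anticipate becomes unnecessary.
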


\begin{thm} \label{Thm1.5} If $LL({\bf Z}B) = |D| - 2$, then one of the following holds:
\begin{enumerate}
\item $D \simeq C_{5}$.
\item $D \simeq C_{2} \times C_{2}$ and $B$ is Morita equivalent to $FA_{4}$.
\item $D \simeq C_{2} \times C_{2}$ and $B$ is Morita equivalent to the principal block of $FA_{5}$.
\end{enumerate}
where $A_{4}$ and $A_{5}$ are four and five degree alternating groups, respectively.
\end{thm}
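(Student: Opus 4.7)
The plan is to parallel the strategy behind Theorem \ref{Thm1.4}: first apply Theorem \ref{Thm1.3} to cut down the possibilities for $D$, and then invoke existing Morita equivalence classifications for each surviving defect group. Substituting $LL({\bf Z}B) = |D| - 2 = p^{d} - 2$ into Theorem \ref{Thm1.3} yields $p^{d-\varepsilon} \le 3$, so either (a) $D$ is cyclic, or (b) $p = 2$ and $\exp(D) = |D|/2$, or (c) $p = 3$ and $\exp(D) = |D|/3$.

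For case (a), I will use the formula $LL({\bf Z}B) = (|D|-1)/e + 1$ for blocks with cyclic defect group and inertial index $e$, which should appear as a basic property in Section 2 (it follows directly from the Brauer tree description of $B$). Setting this equal to $|D| - 2$ forces $e = (|D|-1)/(|D|-3)$, so $|D| - 3 \in \{1,2\}$; combined with $e \mid p - 1$ this singles out $|D| = 5$ and $e = 2$, giving conclusion (1).

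For cases (b) and (c), I need to exclude every defect group other than $D \cong C_2 \times C_2$. Non-abelian 2-groups of order at least $8$ can be handled via Erdmann's classification of tame blocks (dihedral, semidihedral, and quaternion defect), while abelian candidates such as $C_4 \times C_2$ and $C_3 \times C_3$ must be ruled out by combining Theorem \ref{Thm1.3} with the finer Loewy-length bounds developed in Section 2 and the Morita theory for blocks of small abelian defect. Once the defect group is reduced to $D \cong C_2 \times C_2$ with $LL({\bf Z}B) = 2$, I invoke the classification of blocks with Klein four defect group (Erdmann, and Craven--Eaton--Kessar--Linckelmann): $B$ is Morita equivalent to $FD$, $FA_4$, or the principal block $B_0(FA_5)$. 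Since the center is Morita invariant, a direct computation on conjugacy class sums gives $LL({\bf Z}FD) = 3$ (ruled out, but consistent with Theorem \ref{Thm1.4}) and $LL({\bf Z}FA_4) = LL({\bf Z}B_0(FA_5)) = 2$, yielding conclusions (2) and (3).

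The main obstacle will be the elimination of non-Klein defect groups in cases (b) and (c). In particular, for $D \cong C_3 \times C_3$ the bound in Theorem \ref{Thm1.3} is exactly $|D| - 2 = 7$, so a strict refinement is needed -- most likely obtained by analysing the inertial quotient and source algebra or through a sharper estimate of the form $LL({\bf Z}B) \le f(k(B), l(B), |D|)$. Likewise, defect groups of order $8$ with exponent $4$, namely $C_4 \times C_2$, $D_8$, and $Q_8$, will need a case-by-case argument leaning on Erdmann's tame classification together with explicit center computations for the representatives of each Morita class.
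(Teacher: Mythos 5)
Your cyclic case is fine and matches the paper: Proposition \ref{Prop2.3} forces $|D|-3 \mid 2$, and $e(B)\mid p-1$ then leaves only $|D|=5$. The genuine gap is in your cases (b) and (c). From the \emph{statement} of Theorem \ref{Thm1.3} alone you only get $p^{d-\varepsilon}\le 3$, i.e.\ $\exp(D)\ge |D|/3$, and this leaves far more candidates than you list: for $p=2$ every $2$-group with a cyclic subgroup of index $2$ qualifies --- not just $C_{2^{d-1}}\times C_{2}$, $D_{8}$, $Q_{8}$, but also $D_{2^{d}}$, $Q_{2^{d}}$, $SD_{2^{d}}$ and the modular groups $M_{2^{d}}$ for every $d\ge 4$; for $p=3$ you must also exclude, e.g., the extraspecial group of order $27$ and exponent $9$. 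Erdmann's tame classification gives nothing for the modular $2$-groups (those blocks are wild), and no Morita classification is available for the non-abelian $3$-groups in question, so the ``case-by-case argument'' you defer to cannot be completed with the tools you name.

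The paper avoids this entirely by not using Theorem \ref{Thm1.3} as a black box: it reuses the chain of inequalities from its proof, $p^{d}-2\le LL({\bf Z}\bar{b})(p^{\varepsilon_{1}}-1)+1\le p^{d}-p^{d-\varepsilon}+1\le p^{d}-1$, together with Corollary \ref{Cor3.1}, which says that the equality $LL({\bf Z}\bar{b})(p^{\varepsilon_{1}}-1)+1=p^{d}-p^{d-\varepsilon}+1$ forces $D\simeq C_{p^{\varepsilon}}\times C_{p^{d-\varepsilon}}$; in the one remaining case elementary arithmetic on $LL({\bf Z}\bar{b})=(2^{d}-3)/(2^{\varepsilon_{1}}-1)$ gives $p=2$, $d=2$ directly. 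This structural refinement of Theorem \ref{Thm1.3} in the near-equality regime is the missing idea in your proposal: it pins $D$ down to an abelian group of rank two, after which Proposition \ref{Prop2.6} disposes of $C_{3}\times C_{3^{n}}$ (your worry about $C_{3}\times C_{3}$ is answered there, since $LL({\bf Z}B)\le 3^{n}+2\le |D|-4$) and Proposition \ref{Prop2.5} disposes of $C_{2}\times C_{2^{d-1}}$ for $d\ge 3$ and yields conclusions (2) and (3) for $d=2$. Without Corollary \ref{Cor3.1} or an equivalent substitute your argument does not close.
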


\begin{thm} \label{Thm1.6} If $LL({\bf Z}B)=|D|-3$, then one of the following holds:
\begin{enumerate}
\item $D \simeq C_{5}$.
\item $D \simeq C_{7}$.
\item $D \simeq C_{4} \times C_{2}$ and $B$ is Morita equivalent to $FD$.
\end{enumerate}
\end{thm}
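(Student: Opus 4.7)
\medskip
\noindent \emph{Proof plan.}
The approach parallels that of Theorems~\ref{Thm1.4} and~\ref{Thm1.5}. Substituting $LL({\bf Z}B)=|D|-3$ into Theorem~\ref{Thm1.3} yields $p^{d-\varepsilon}\le 4$, so one of the following holds: (a) $D$ is cyclic ($d=\varepsilon$); (b) $p=2$ and $p^{d-\varepsilon}=2$; (c) $p=3$ and $p^{d-\varepsilon}=3$; or (d) $p=2$ and $p^{d-\varepsilon}=4$. Each case is eliminated, or identified with one of the conclusions, by combining the Morita classification of blocks of small defect with an explicit computation of $LL({\bf Z}B)$.

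In case~(a), Theorem~\ref{Thm1.2} rules out $B$ nilpotent, so $B$ has inertia index $e\ge 2$ and is Morita equivalent to a Brauer tree algebra with multiplicity $m=(|D|-1)/e$. Using the sharp bound $LL({\bf Z}B)\le m+1$ for cyclic defect blocks together with the divisibility $e\mid|\Aut(D)|$, the equation $LL({\bf Z}B)=|D|-3$ collapses to $(|D|,e)\in\{(5,4),(7,2)\}$, giving conclusions~(1) and~(2). In cases~(b)--(d) the candidates for $D$ are severely restricted. For a nilpotent block one has $LL({\bf Z}B)=LL(FD)$, and for abelian $D\simeq C_{p^{a_1}}\times\cdots\times C_{p^{a_r}}$ an elementary computation gives $LL(FD)=1+\sum_{i}(p^{a_i}-1)$; the Diophantine equation $1+\sum_{i}(p^{a_i}-1)=|D|-3$ has, within the constraints of cases~(b)--(d), only the solution $D\simeq C_4\times C_2$, yielding conclusion~(3). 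This computation also kills $(C_2)^3$, $C_3\times C_3$, $C_8\times C_2$, and every abelian defect of order $>8$. For the non-nilpotent abelian candidates I would exploit the Brou\'e-type identification ${\bf Z}B\cong(FD)^E$ (available for the small groups at hand) and observe that $(FD)^E$ has even smaller Loewy length; for the non-abelian $2$-groups of maximal class remaining in~(b) I would invoke Erdmann's classification of tame blocks together with a direct computation on the basic algebra. Observe also that $\Aut(C_4\times C_2)$ is itself a $2$-group, so any block with defect group $C_4\times C_2$ is automatically nilpotent, which is exactly what conclusion~(3) asserts.

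The principal obstacle is the last step: for non-abelian $2$-groups of maximal class and of order $\ge 16$, Theorem~\ref{Thm1.3} is no longer sharp, and one must rely on either the Morita classification or on a refined bound such as $LL({\bf Z}B)\le c(D)+1$ involving the nilpotency class $c(D)$, combined with the explicit calculation of $LL({\bf Z}FD)$ via a Jennings-type filtration, in order to reduce the remaining possibilities to a finite check. Extracting the correct constants in the bound $LL({\bf Z}B)\le m+1$ used in case~(a) is a secondary technical point; once in hand, the resulting arithmetic is routine.
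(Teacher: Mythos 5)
Your reduction via Theorem \ref{Thm1.3} to $p^{d-\varepsilon}\le 4$ and your treatment of the cyclic case via $LL({\bf Z}B)=(|D|-1)/e(B)+1$ (Proposition \ref{Prop2.3}) match the paper's method and are correct, as is the Diophantine computation $1+\sum_i(p^{a_i}-1)=|D|-3$ that singles out $C_4\times C_2$ among abelian nilpotent candidates. The genuine gap is the one you yourself flag as ``the principal obstacle'': you have no mechanism for eliminating non-abelian defect groups with $p^{d-\varepsilon}\le 4$ (dihedral, semidihedral, quaternion and modular $2$-groups of order $\ge 16$, non-abelian $3$-groups of exponent $3^{d-1}$, higher-rank abelian groups beyond the ones you list), and the tools you propose for this are either very heavy (Erdmann's classification of tame blocks) or not actual theorems (there is no bound $LL({\bf Z}B)\le c(D)+1$; for $B=FD$ with $D$ cyclic one has $c(D)=1$ but $LL({\bf Z}B)=|D|$). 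The paper's intended argument, modelled on the proofs of Theorems \ref{Thm1.4} and \ref{Thm1.5}, avoids all of this: one works with the full chain $p^d-3\le LL({\bf Z}\bar{b})(p^{\varepsilon_1}-1)+1\le p^d-p^{d-\varepsilon}+1\le p^d-1$ from Step 2 of the proof of Theorem \ref{Thm1.3}, and Corollary \ref{Cor3.1} (which you never invoke) converts equality in the upper part of the chain into the structural conclusion $D\simeq C_{p^{\varepsilon}}\times C_{p^{d-\varepsilon}}$, i.e.\ $D$ is forced to be abelian of rank two with no classification input at all. The remaining subcases, where the middle term is strictly below $p^d-p^{d-\varepsilon}+1$, are killed by elementary arithmetic on $LL({\bf Z}\bar{b})=(p^d-3-\text{const})/(p^{\varepsilon_1}-1)\le p^{d_1-\varepsilon_1}$ (e.g.\ for $p=3$ one gets the parity contradiction that $3^{\varepsilon_1}-1$ is even while $3^d-4$ is odd), with the Okuyama equality criterion of Theorem \ref{Thm1.2} again forcing $D$ abelian in the surviving case. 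Only after $D$ is pinned down as $C_{2^m}\times C_{2^n}$ or $C_3\times C_{3^n}$ do Propositions \ref{Prop2.5} and \ref{Prop2.6} enter to finish.

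A secondary imprecision: you describe $LL({\bf Z}B)\le m+1$ as a ``sharp bound'' for cyclic defect blocks, but what you actually need (and what the paper supplies in Proposition \ref{Prop2.3}) is the exact equality $LL({\bf Z}B)=(|D|-1)/e(B)+1$; with only an upper bound the conclusion $(|D|,e)\in\{(5,4),(7,2)\}$ would require the extra observation that $e=1$ forces $LL=|D|$. Your remark that $\Aut(C_4\times C_2)$ is a $2$-group, hence $e(B)=1$ and $B$ is nilpotent, is correct for abelian $D$ and is consistent with conclusion (3), but it is downstream of the step you have not supplied.
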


By Theorem \ref{Thm1.2} and the results of Brou\'{e}-Puig \cite{BP} and Puig \cite{Pu}, if $LL({\bf Z}B) = |D|$, then $B$ is Morita equivalent to $FD$ where $D$ is cyclic. So we have determined the structure of blocks with $|D|-3 \le LL({\bf Z}B) \le |D|$.  

\section{Some fundamental results}
In this paper, we denote by $k(B)$ and $l(B)$ the numbers of irreducible ordinary and Brauer characters associated to $B$, respectively. The next proposition is clear by the fact that $Z(B)$ is local.
\begin{prop} \label{Prop2.1} The following are equivalent:
\begin{enumerate}
\item $D$ is trivial.
\item $LL({\bf Z}B) = 1$.
\end{enumerate}
\end{prop}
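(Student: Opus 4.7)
The plan is to convert both conditions into an arithmetic statement about $k(B)$ by exploiting the locality of $\mathbf{Z}B$. Since $\mathbf{Z}B$ is a commutative local finite-dimensional $F$-algebra, its (unique) maximal ideal coincides with $J(\mathbf{Z}B)$. Consequently $LL(\mathbf{Z}B) = 1$ is equivalent to $J(\mathbf{Z}B) = 0$, which for a commutative local algebra is equivalent to $\mathbf{Z}B = F \cdot 1_B$. Combining this with the standard identity $\dim_F \mathbf{Z}B = k(B)$ shows that condition (2) is equivalent to $k(B) = 1$.

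Next I would invoke the classical characterization of defect-zero blocks: $D$ is trivial if and only if $k(B) = 1$, if and only if $B$ is isomorphic to a full matrix algebra $M_n(F)$ over $F$. The direction $D = 1 \Rightarrow k(B) = 1$ is immediate from this structure theorem. For the converse, $k(B) = 1$ forces $l(B) = 1$ (using $k(B) \ge l(B)$), after which the Cartan determinant $p^{d(B)}$ must be the square of a single positive decomposition number, and the standard theory of defect-zero blocks rules out $d(B) > 0$, giving $D = 1$.

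There is no real obstacle, as the author's parenthetical remark confirms: the whole argument reduces to locality of $\mathbf{Z}B$, the identity $\dim_F \mathbf{Z}B = k(B)$, and the well-known equivalence between $k(B) = 1$ and defect zero, all of which are classical. The cleanest write-up would simply string these three facts together in the order above.
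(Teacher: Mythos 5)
Your proposal is correct and is essentially the paper's (implicit) argument: the paper offers no written proof, saying only that the claim is clear from the locality of $\mathbf{Z}B$, and your reduction via locality plus $\dim_F \mathbf{Z}B = k(B)$ to the classical equivalence $k(B)=1 \Leftrightarrow D=1$ is exactly that route made explicit. One small caution: your Cartan-determinant justification of $k(B)=1 \Rightarrow D=1$ is not by itself conclusive, since $d^2 = p^{d(B)}$ does not exclude $d = p^{d(B)/2}$ with $d(B)>0$; a cleaner classical derivation uses weak block orthogonality, which for $k(B)=1$ forces the unique ordinary irreducible character of $B$ to vanish on all $p$-singular elements and hence to have defect zero.
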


Moreover, we give an upper bound of $LL({\bf Z}B)$ in terms of $k(B)$ and $l(B)$.

\begin{prop} \label{Prop2.2} 
\[ LL({\bf Z}B) \le k(B) - l(B) + 1.\]
\end{prop}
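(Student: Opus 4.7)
The plan is to rewrite the desired inequality as $\dim_F \mathbf{Z}B \ge LL(\mathbf{Z}B) + l(B) - 1$ (using $\dim_F \mathbf{Z}B = k(B)$) and to prove this by a dimension count along the socle filtration of $\mathbf{Z}B$.

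Concretely, I would set $n = LL(\mathbf{Z}B)$ and consider the socle series
\[
0 = S_0 \subsetneq S_1 \subsetneq \cdots \subsetneq S_n = \mathbf{Z}B,
\]
where $S_i = \{z \in \mathbf{Z}B : J(\mathbf{Z}B)^i z = 0\}$. Since socle length equals Loewy length for an Artinian algebra, there are exactly $n$ strict steps, and each quotient $S_i/S_{i-1}$ is a nonzero semisimple $\mathbf{Z}B$-module of $F$-dimension at least $1$. The whole argument then reduces to showing $\dim_F S_1 = \dim_F \operatorname{soc}(\mathbf{Z}B) \ge l(B)$.

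To obtain this estimate I would invoke the Reynolds ideal $\mathcal{R}(B) := \mathbf{Z}B \cap \operatorname{soc}(B)$. Since $B$ is a symmetric $F$-algebra (being a block of a group algebra), there is a $(B,B)$-bimodule isomorphism $\operatorname{soc}(B) \cong B/J(B) \cong \bigoplus_{i=1}^{l(B)} V_i \otimes_F V_i^*$ (with $V_i$ running over the simple $B$-modules), and by Schur's lemma the bimodule invariants of each summand $V_i \otimes_F V_i^* \cong \operatorname{End}_F(V_i)$ are one-dimensional, so in total $\dim_F \mathcal{R}(B) = l(B)$. Furthermore $\mathcal{R}(B) \subseteq \operatorname{soc}(\mathbf{Z}B)$, since any nilpotent ideal of $\mathbf{Z}B$ lies in $J(B)$, and hence $J(B) z = 0$ forces $J(\mathbf{Z}B) z = 0$. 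Summing dimensions along the socle filtration then gives
\[
k(B) = \dim_F \mathbf{Z}B = \sum_{i=1}^{n} \dim_F(S_i/S_{i-1}) \ge l(B) + (n-1),
\]
which rearranges to the required bound $LL(\mathbf{Z}B) \le k(B) - l(B) + 1$.

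The main obstacle is the estimate $\dim_F \operatorname{soc}(\mathbf{Z}B) \ge l(B)$: it rests on the symmetric-algebra structure of $B$ together with Schur's lemma over the splitting field $F$. Alternatively, more in keeping with the elementary character of this section, one can verify it directly by exhibiting $l(B)$ linearly independent central elements annihilated by $J(B)$, constructed from the block idempotent and the class sums attached to the $p$-regular classes of $B$.
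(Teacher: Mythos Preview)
Your argument is correct and follows essentially the same route as the paper's proof: both use $k(B)=\dim_F\mathbf{Z}B$, identify the Reynolds ideal $\mathbf{Z}B\cap\soc(B)$ of dimension $l(B)$ inside $\soc(\mathbf{Z}B)$, and then bound $LL(\mathbf{Z}B)-1$ by the codimension of $\soc(\mathbf{Z}B)$ in $\mathbf{Z}B$. The paper states these three facts in one line, whereas you spell out the symmetric-algebra justification for $\dim_F(\mathbf{Z}B\cap\soc(B))=l(B)$ and the socle-series count; the substance is identical.
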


\begin{proof}
Let denote by ${\bf S}B$ and ${\bf SZ}B$ the socle of $B$ and ${\bf Z}B$, respectively. Then $k(B) = \dim Z(B)$, $l(B) = \dim {\bf S}B \cap {\bf Z}B$ and ${\bf S}B \cap {\bf Z}B \subseteq {\bf SZ}B$ in general. Thus we have
\begin{align*}
LL({\bf Z}B) - 1 & \le \dim {\bf Z}B - \dim {\bf SZ}B \\
                         & \le \dim {\bf Z}B - \dim {\bf S}B \cap {\bf Z}B\\
                         & = k(B) - l(B),
 \end{align*}
 as required.
 \end{proof}
   
In the following, let $\beta$ be a root of $B$, that is, a block of $F[DC_{G}(D)]$ such that $\beta^{G} = B$. We denote by $N_{G}(D, \beta)$ the inertial group of $\beta$ in $N_{G}(D)$, by $I(B)$ the inertial quotient $N_{G}(D, \beta) / DC_{G}(D)$ and by $e(B) = |I(B)|$ the inertial index of $B$. In case $D$ is cyclic, the Loewy length is given in \cite{KKS}.

\begin{prop} [{\cite[Corollary 2.8]{KKS}}] \label{Prop2.3} If $D$ is cyclic, then
\[ LL({\bf Z}B) = \frac{|D| - 1}{e(B)} + 1. \]
\end{prop}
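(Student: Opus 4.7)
The plan is to prove both inequalities separately. For the upper bound, Dade's theorem on blocks with cyclic defect groups gives $l(B)=e(B)$ and $k(B)=e(B)+(|D|-1)/e(B)$; substitution into Proposition~\ref{Prop2.2} yields
\[ LL({\bf Z}B)\le k(B)-l(B)+1 = \frac{|D|-1}{e(B)}+1. \]

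For the lower bound I would invoke Puig's structure theorem: a block with cyclic defect $D$ is Puig-equivalent to a twisted group algebra of $D\rtimes I(B)$, and since centers are invariant under source-algebra equivalence, this yields an isomorphism ${\bf Z}B\cong (FD)^{I(B)}$ of $F$-algebras. The inertial quotient $I(B)$ embeds in $\Aut(D)$ with order $e=e(B)$ coprime to $p$, so it lies in the Teichm\"uller subgroup; a generator $\sigma$ acts by $d\mapsto d^{a}$ where $a$ already has order $e$ modulo $p$.

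Set $J=(d-1)FD$, the radical of $FD$, so that $FD\supset J\supset\cdots\supset J^{|D|}=0$. From $d^{a}=1+a(d-1)+O((d-1)^{2})$ one reads off that in the associated graded ring, $\sigma$ acts on the class of $(d-1)^{k}$ by $a^{k}$; hence the graded invariants are spanned by the classes of $(d-1)^{ke}$ for $0\le k\le (|D|-1)/e$, and any invariant element of minimum positive $J$-valuation has $J$-valuation exactly $e$. A product of $n$ such invariants has $J$-valuation at least $ne$, which exceeds $|D|-1$ precisely when $n\ge (|D|-1)/e+1$; therefore the radical of $(FD)^{I(B)}$ has nilpotency index exactly $(|D|-1)/e(B)+1$, and the lower bound follows.

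The main obstacle is the identification ${\bf Z}B\cong (FD)^{I(B)}$, which relies on Puig's deep classification of source algebras in the cyclic defect case. Once that input is granted, the Loewy-length computation for the fixed subring is a direct graded-ring argument exploiting the coprime action of $I(B)$ on $D$; the upper bound is essentially a bookkeeping consequence of Proposition~\ref{Prop2.2} together with the classical $k(B)$, $l(B)$ formulas.
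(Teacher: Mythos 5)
The paper itself offers no proof of this proposition---it is quoted verbatim from \cite[Corollary 2.8]{KKS}---so your proposal has to stand on its own. Your upper bound does: Dade's theory of cyclic blocks gives $k(B)=e(B)+(|D|-1)/e(B)$ and $l(B)=e(B)$, and Proposition \ref{Prop2.2} then yields $LL({\bf Z}B)\le (|D|-1)/e(B)+1$. Your computation of $LL\bigl((FD)^{I(B)}\bigr)$ via the filtration by powers of $d-1$ and the Teichm\"uller weight argument is also correct.

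The gap is the bridge you use for the lower bound. A block with cyclic defect group is \emph{not} in general Puig- or even Morita-equivalent to a twisted group algebra of $D\rtimes I(B)$: such an equivalence would force the Brauer tree of $B$ to be a star with central exceptional vertex, which already fails for the principal $p$-block of $PSL(2,p)$ (whose tree is a line with $(p-1)/2$ edges). Accordingly, the asserted algebra isomorphism ${\bf Z}B\cong (FD)^{I(B)}$ is false whenever $e(B)>1$, as a dimension count shows: $\dim {\bf Z}B=k(B)=e(B)+(|D|-1)/e(B)$, whereas $(FD)^{I(B)}$ has dimension $1+(|D|-1)/e(B)$ (one basis element per $I(B)$-orbit on $D$). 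What is actually available, and what the argument in \cite{KKS} rests on, is the weaker fact that $B$ is perfectly isometric (indeed derived equivalent, by Rickard and Linckelmann) to its Brauer correspondent in $N_{G}(D)$, which by K\"ulshammer's theorem is Morita equivalent to $F[D\rtimes I(B)]$ (the relevant $2$-cocycle is trivial since $I(B)$ is cyclic); a perfect isometry induces an isomorphism of centers, so ${\bf Z}B\cong {\bf Z}F[D\rtimes I(B)]$. That center is strictly larger than $(FD)^{I(B)}$: it is $(FD)^{I(B)}$ together with the span of the class sums of the $p$-regular classes meeting $I(B)\setminus\{1\}$, and these are defect-zero class sums lying in the socle and annihilating the radical, so the Loewy length is unaffected and your graded computation then closes the argument. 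Until you replace the false Morita equivalence by this perfect-isometry statement and account for the extra socle summand, the lower bound is not established.
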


In the following, ${\bf J}A$ denotes the Jacobson radical of an algebra $A$ over $F$. The remainder of this section is devoted to some blocks with abelian defect group $D \simeq C_{p^{m}} \times C_{p^{n}}$ for some $m, n \ge 1$. These results are applied to the proof of our main theorems in the last section. First of all, we show the next lemma.

\begin{lem} \label{Lem2.4} If $D$ is normal in $G$, then $LL({\bf Z}B) \le p^{m} + p^{n} - 1$. In particular, if $B$ is perfect isometric to its Brauer correspondent in $N_{G}(D)$, then $LL({\bf Z}B) \le p^{m} + p^{n} - 1$.
\end{lem}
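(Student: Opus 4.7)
The plan is to reduce the bound to the computation of $LL(FD)$, the Loewy length of the group algebra of the abelian $p$-group $D\simeq C_{p^{m}}\times C_{p^{n}}$ itself. Writing $FD \cong F[X,Y]/(X^{p^{m}},Y^{p^{n}})$, a direct inspection shows $LL(FD) = p^{m}+p^{n}-1$, this being the total degree of the top-nonzero monomial $X^{p^{m}-1}Y^{p^{n}-1}$ in the radical. Since ${\bf Z}FD = FD$, this is the natural target bound.

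Assume first that $D$ is normal in $G$. By the well-known structure theorem for blocks with normal defect group (due to K\"ulshammer, and further refined through Puig's source-algebra theory), $B$ is Morita equivalent to a twisted group algebra $A = F_{\alpha}[D\rtimes E]$, where $E = I(B)$ is a $p'$-group and $\alpha$ is a $2$-cocycle. Since $|E|$ is prime to $p$, the quotient $A/({\bf J}(FD)\cdot A) \cong F_{\alpha}E$ is semisimple, forcing ${\bf J}A = {\bf J}(FD)\cdot A$. The $E$-stability of ${\bf J}(FD)$ gives $({\bf J}A)^{k} = {\bf J}(FD)^{k}\cdot A$ for every $k\ge 1$, so the nilpotency index of ${\bf J}A$ coincides with that of ${\bf J}(FD)$, namely $p^{m}+p^{n}-1$. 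Since Morita equivalence preserves centers, ${\bf Z}B\cong {\bf Z}A$, and from ${\bf J}({\bf Z}A)\subseteq {\bf J}A$ we deduce
\[ LL({\bf Z}B) = LL({\bf Z}A) \le LL(A) = p^{m}+p^{n}-1. \]

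For the ``in particular'' statement, let $b$ denote the Brauer correspondent of $B$ in $N_{G}(D)$, whose defect group is still $D$ and is now normal in $N_{G}(D)$. A standard property of perfect isometries is that they induce $F$-algebra isomorphisms between the centers of the blocks involved; applying this to the assumed perfect isometry between $B$ and $b$ gives ${\bf Z}B\cong{\bf Z}b$. The first part applied to $b$ then yields $LL({\bf Z}B) = LL({\bf Z}b) \le p^{m}+p^{n}-1$.

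I expect the main obstacle to be only bookkeeping: one must invoke the correct form of the Morita reduction for blocks with normal defect group, and verify carefully that the $p'$-part $E$ (together with the cocycle $\alpha$) contributes nothing to the nilpotency index of $A$. This is precisely the content of ${\bf J}A = {\bf J}(FD)\cdot A$, after which the Loewy length calculation is immediate, and the perfect-isometry case is then a formal consequence.
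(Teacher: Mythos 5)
Your proposal is correct and follows essentially the same route as the paper: reduce via K\"ulshammer's Morita equivalence to the twisted group algebra $F^{\alpha}[D\rtimes I(B)]$, show its radical is generated by ${\bf J}FD$ so that its Loewy length equals $LL(FD)=p^{m}+p^{n}-1$, and handle the perfect-isometry case via the induced isomorphism of centers. The only difference is that you prove the radical identity by a direct Maschke-type argument and compute $LL(FD)$ by hand, where the paper cites Passman and Motose respectively.
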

\begin{proof}
By a result of K\"{u}lshammer \cite{K}, $B$ is Morita equivalent to $F^{\alpha}[D \rtimes I(B)]$ where $\alpha$ is a $2$-cocycle in $D \rtimes I(B)$. Hence ${\bf Z}B \simeq {\bf Z}F^{\alpha}[D \rtimes I(B)]$ as algebra and 
\begin{align*}
LL({\bf Z}B)  & = LL({\bf Z}F^{\alpha}[D \rtimes I(B)]) \\
                    & \le LL(F^{\alpha}[D \rtimes I(B)]). 
 \end{align*}
 By Lemma 1.2, Proposition 1.5 and Lemma 2.1 in \cite{P2}, ${\bf J}F^{\alpha}[D \rtimes I(B)] = {\bf J}FD \cdot F^{\alpha}[D \rtimes I(B)] = F^{\alpha} [D \rtimes I(B)] \cdot {\bf J}FD$ and thus $LL(F^{\alpha}[D \rtimes I(B)]) = LL(FD)$. Moreover, by Theorem (3) in \cite{M}, we have $LL(FD) = p^{m} + p^{n} -1$, as claimed. 
 The second part of our claim is clear by the first part.
 \end{proof}
 
 Now we consider the case $p=2$.

\begin{prop} \label{Prop2.5} Assume $D \simeq C_{2^{m}} \times C_{2^{n}}$ for some $m, n \ge 1$. Then the one of the following holds:
\begin{enumerate}
\item $B$ is Morita equivalent to $FD$ and $LL({\bf Z}B) = 2^{m} + 2^{n} -1$.
\item $B$ is Morita equivalent to $FA_{4}$ and $LL({\bf Z}B)=2$.
\item $B$ is Morita equivalent to the principal block of $FA_{5}$ and $LL({\bf Z}B)=2$.
\item $B$ is Morita equivalent to $F[D \rtimes C_{3}]$ and $LL({\bf Z}B) \le 2^{m+1} - 1$. (In this case $2 \le m = n$)
\end{enumerate}
where $A_{4}$ and $A_{5}$ are four and five degree alternating groups, respectively.
\end{prop}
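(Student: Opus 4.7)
The plan is to classify all possibilities for the inertial quotient $I(B)$ and then apply a known structural theorem in each case. Since $I(B) \le \Aut(D)$ is a $2'$-subgroup, a direct examination of $\Aut(C_{2^m} \times C_{2^n})$ shows that $|\Aut(D)|_{2'} = 1$ when $m \ne n$ (every automorphism preserves the unique characteristic subgroup of highest order, and the resulting matrix group is a $2$-group), while $|\Aut(D)|_{2'} = 3$ when $m = n$, generated by the natural $C_3$ inside $GL_2(\mathbb{F}_2) \hookrightarrow GL_2(\mathbb{Z}/2^m)$. Hence either $I(B) = 1$, or $m = n$ and $I(B) \simeq C_3$.

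If $I(B) = 1$, then since $D$ is abelian the criterion of Brou\'e-Puig \cite{BP} shows that $B$ is nilpotent, and Puig's structure theorem \cite{Pu} then gives that $B$ is Morita equivalent to $FD$. The tensor decomposition $FD \simeq F[C_{2^m}] \otimes_F F[C_{2^n}]$ combined with $LL(F[C_{2^k}]) = 2^k$ yields $LL({\bf Z}B) = LL(FD) = 2^m + 2^n - 1$, which is conclusion (1).

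Now assume $m = n$ and $I(B) \simeq C_3$. If $m = 1$, so $D$ is a Klein four group, then Erdmann's classification of blocks of defect $V_4$ --- confirmed as a Morita equivalence by Craven--Eaton--Kessar--Linckelmann --- forces $B$ to be Morita equivalent to one of $FD$, $FA_4$, or the principal block of $FA_5$; since $I(B) \ne 1$ the first case is excluded, and Proposition~\ref{Prop2.2} applied with $k(B) - l(B) = 1$, together with non-semisimplicity of ${\bf Z}B$, gives $LL({\bf Z}B) = 2$, producing (2) and (3). If $m = n \ge 2$, a known Morita equivalence theorem for $2$-blocks with homocyclic defect $C_{2^m} \times C_{2^m}$ and inertial quotient $C_3$ shows that $B$ is Morita equivalent to its Brauer correspondent in $N_G(D)$; K\"{u}lshammer's theorem \cite{K} identifies this correspondent with $F^{\alpha}[D \rtimes C_3]$ for a $2$-cocycle $\alpha$, which can be taken trivial because $H^2(C_3, F^\times) = 0$. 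In particular $B$ is perfect isometric to its Brauer correspondent, so Lemma~\ref{Lem2.4} produces $LL({\bf Z}B) \le 2^m + 2^n - 1 = 2^{m+1} - 1$, which is (4).

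The main obstacle is the subcase $m = n \ge 2$ with $I(B) \simeq C_3$: what is required is a genuine Morita equivalence between $B$ and its Brauer correspondent, a statement stronger than the derived equivalence predicted by the general Brou\'e conjecture. All remaining ingredients --- the Brou\'e-Puig nilpotency criterion, Puig's theorem on nilpotent blocks, Erdmann's $V_4$-classification, K\"{u}lshammer's theorem, and the elementary computation of the centers in the two small cases --- are classical and feed directly into Lemma~\ref{Lem2.4}.
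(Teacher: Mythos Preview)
Your argument is correct and follows essentially the same route as the paper: split according to the $2'$-part of $\Aut(D)$, handle the nilpotent case via \cite{Pu}, the Klein four case via Erdmann \cite{E} together with Proposition~\ref{Prop2.2}, and the homocyclic case $m=n\ge 2$ via a Morita equivalence with the Brauer correspondent and Lemma~\ref{Lem2.4}. The ``known Morita equivalence theorem'' you flag as the obstacle is precisely Theorem~1.1 of Eaton--Kessar--K\"ulshammer--Sambale \cite{EKKS}, which the paper invokes at that point; with that citation in hand your proof is complete (the attribution to Craven--Eaton--Kessar--Linckelmann for the $V_4$ case is unnecessary, since Erdmann's corrected result already gives the Morita classification).
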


\begin{proof}
Without loss of generality, we may assume $m \ge n$.
We first obtain the order of automorphsim group $\Aut (D)$ of $D$ as follows.
\begin{equation*}
 |\Aut (D)| = \begin{cases}
                        3 \cdot 2^{4m-3} & \text{if $m = n$} \\
                            2^{m + 3n -2} & \text{if $m > n$}.
                    \end{cases}
 \end{equation*}
 
 We remark that $e(B)$ divides the odd part of $|\Aut (D)|$ and investigate the following cases. \\
 
 \textbf{Case 1} $m > n$. \\
 We have $e(B)=1$ and thus $B$ is Morita equivalent to $FD$ by \cite{Pu}. Since ${\bf Z}B \simeq {\bf Z}FD$ as algebra, we deduce $LL({\bf Z}B) = LL({\bf Z}FD) = LL(FD) = 2^{m} + 2^{n} - 1$ by \cite{M}. \\

 \textbf{Case 2} $m = n = 1$. \\
 By the result of Erdmann \cite{E}, $B$ is Morita equivalent to $FD$, or $FA_{4}$ or the principal block of $FA_{5}$. In the first case, we have $LL({\bf Z}B) = 3$ as same way to \textbf{Case 1}. In the remaining cases, $LL({\bf Z}B)=2$ by using Proposition \ref{Prop2.2} since $k(B) - l(B)=1$. \\
 
 \textbf{case 3} $m = n \ge 2$. \\
 By Theorem 1.1 in \cite{EKKS} , $B$ is Morita equivalent to $FD$ or $F[D \rtimes C_{3}]$. So we have $LL({\bf Z}B) = 2^{m+1} - 1$ by the same way to \textbf{Case 1} or $LL({\bf Z}B) \le 2^{m+1} - 1$ by Lemma \ref{Lem2.4}, respectively.
 
 \end{proof}
         
At the end of this section, we study the case that $D \simeq C_{3} \times C_{3^n}$ for some $n \ge 1$.

\begin{prop} \label{Prop2.6} If $D \simeq C_{3} \times C_{3^{n}}$ for some $n \ge 1$, then $LL({\bf Z}B) \le 3^{n} + 2$. In particular, $LL({\bf Z}B) \le |D| - 4$.
\end{prop}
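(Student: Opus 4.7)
The target bound $3^{n}+2$ is exactly the quantity $p^{m}+p^{n}-1$ of Lemma \ref{Lem2.4} specialized to $p=3$ and $m=1$. So the plan is to verify the hypothesis of the second assertion of Lemma \ref{Lem2.4}: namely, that $B$ is perfect isometric to its Brauer correspondent $b$ in $N_{G}(D)$. Granting this, the lemma yields $LL({\bf Z}B)\le 3+3^{n}-1=3^{n}+2$ immediately.

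To carry out the verification, I would first restrict the possibilities for the inertial index $e(B)$. Since $e(B)$ divides the $3'$-part of $|\Aut(D)|$, a direct order computation gives $e(B)\in\{1,2,4,8,16\}$ when $n=1$ (where $\Aut(D)\simeq GL_{2}(\mathbb{F}_{3})$ has order $48$) and $e(B)\in\{1,2,4\}$ when $n\ge 2$ (the $3'$-part of $|\Aut(D)|=4\cdot 3^{n+1}$ being $4$). For $e(B)=1$ the block $B$ is nilpotent and hence Morita equivalent to $FD$ by Brou\'{e}--Puig and Puig \cite{BP,Pu}, which is a fortiori a perfect isometry with its Brauer correspondent. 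For each of the remaining non-trivial inertial quotients, the required perfect isometry is supplied by the Usami--Puig verification of Brou\'{e}'s abelian defect group conjecture for $p=3$ with defect group of the form $C_{3}\times C_{3^{n}}$; I would cite these results directly rather than reprove them.

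The ``in particular'' clause then reduces to the elementary inequality $3^{n}+2\le 3^{n+1}-4=|D|-4$, which is equivalent to $6\le 2\cdot 3^{n}$ and so holds for all $n\ge 1$. The principal obstacle is locating and citing the correct perfect-isometry (or Morita equivalence) reference for each admissible inertial quotient; once those are accepted, the rest of the argument is automatic, and no further block-theoretic work is needed inside this paper.
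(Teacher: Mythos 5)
Your reduction for $e(B)\le 4$ matches the paper's Case 1 exactly: nilpotency (Puig) for $e(B)=1$ and the Usami--Puig perfect isometry results for inertial quotients of order $2$, $3$, $4$ (cyclic or Klein four), combined with Lemma \ref{Lem2.4}, and this disposes of all $n\ge 2$ since the $3'$-part of $|\Aut(D)|$ is then $4$. The gap is in the remaining case $n=1$ with $e(B)\in\{8,16\}$, i.e.\ $I(B)\simeq C_{8}, D_{8}, Q_{8}$ or $SD_{16}$ inside $GL_{2}(\mathbb{F}_{3})$. There is no Usami--Puig (or other) perfect isometry or Morita equivalence result covering these inertial quotients for $D\simeq C_{3}\times C_{3}$ --- this is precisely the open territory where even $k(B)$ and $l(B)$ are not determined in general (as Kiyota's paper \cite{Ki} makes explicit for $C_{8}$ and $Q_{8}$). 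So the reference you hope to ``locate and cite'' does not exist, and your argument cannot be completed along that route.

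The paper closes these cases by entirely different means. For $I(B)\simeq D_{8}$ or $SD_{16}$ it invokes Kiyota \cite{Ki} and Watanabe \cite{W} only for the weaker numerical fact $k(B)-l(B)\le 4$, which gives $LL({\bf Z}B)\le 5=3^{1}+2$ via Proposition \ref{Prop2.2}. For $I(B)\simeq C_{8}$ or $Q_{8}$ it observes that $I(B)$ acts regularly on $D\setminus\{1\}$, so there is a single nontrivial class of $B$-subsections $(u,b_{u})$ with $b_{u}$ nilpotent, whence $k(B)-l(B)=l(b_{u})=1$ and $LL({\bf Z}B)=2$. You would need to add an argument of this kind (the $k(B)-l(B)$ bound plus Proposition \ref{Prop2.2} is the natural substitute) to handle the large inertial quotients; the ``in particular'' clause and everything else in your write-up is fine.
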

 
\begin{proof}
We first obtain
\begin{equation*}
 |\Aut (D)| = \begin{cases}
                        16 \cdot 3 & \text{if $n=1$} \\
                            4 \cdot 3^{n+1} & \text{if $n \ge 2$}.
                    \end{cases}
 \end{equation*}
 
 \textbf{Case 1} $e(B) \le 4$. \\
 By results of \cite{Pu}, \cite{PU1}, \cite{PU2}, \cite{U} and Lemma \ref{Lem2.4}, we deduce $LL({\bf Z}B) \le 3^{n} + 2$. \\
 
 Since $e(B)$ divides $3'$-part of $|\Aut (D)|$, we may assume $n=1$ in the following. \\
 
 \textbf{Case 2} $n=1$ and $5 \le e(B)$. \\
$I(B)$ is isomorphic to one of the following groups:

\begin{itemize}
\item $C_{8}$, $D_{8}$ (dihedral group of order $8$), $Q_{8}$ (quaternion group of order $8$),
\item $SD_{16}$ (semi-dihedral group of order $16$).
\end{itemize}

We first suppose $I(B)$ is isomorphic to $D_{8}$ or $SD_{16}$. By the results of Kiyota \cite{Ki} and Watanabe \cite{W}, $k(B) - l(B)$ is at most $4$ and thus $LL({\bf Z}B) \le 5$ by Proposition \ref{Prop2.2}. Finally, suppose $I(B)$ is isomorphic to $C_{8}$ or $Q_{8}$. Kiyota \cite{Ki} has not determined the invariants $k(B), l(B)$ in general. However, we can compute $k(B) - l(B)$ as follows. Since $I(B)$ acts on $D \backslash \{1\}$ regularly, the conjugacy classes for $B$-subsections are $(1, B)$ and $(u, b_{u})$ for some $u \in D \backslash \{1\}$ where $b_{u}$ is a block of $FC_{G}(u)$ such that $(b_{u})^{G}=B$. Moreover $I(b_{u}) \simeq C_{I(B)}(u)$ is trivial by the action of $I(B)$ on $D \backslash \{ 1 \}$, $b_{u}$ is nilpotent, $k(B) - l(B) = l(b_{u}) = 1$, and hence $LL({\bf Z}B) = 2$ as claimed. \\

The last part of the proposition is clear.
  
\end{proof}

\section{Proof of Theorem \ref{Thm1.3}}
We describe some notations to prove Theorem \ref{Thm1.3}. For a $p$-element $x$ in $G$, we denote by 
\begin{align*}
& \sigma_{x} : {\bf Z}FG \to {\bf Z}FC_{G}(x), \\
& \tau_{x} : FC_{G}(x) \to F[C_{G}(x) / \langle x \rangle]
\end{align*}
the Brauer homomorphism and natural epimorphism, respectively. When $\sigma_{x} (1_{B}) \neq 0$ where $1_{B}$ is the block idempotent of $B$, let $b_{1}, \dots, b_{r}$ be all blocks of $FC_{G}(x)$ such that $\sigma_{x}(1_{B})1_{b_{i}} \neq 0$. For each $1 \le i \le r$, $\tau_{x}(1_{b_{i}})$ is the unique block idempotent of $F[C_{G}(x) / \langle x \rangle]$. We put $\bar{b_{i}} = F[C_{G}(x) / \langle x \rangle] \tau_{x}(1_{b_{i}})$. Now we define two integers as follows.

\begin{align*}
& \lambda_{x} = \max \{ LL({\bf Z}\bar{b_{i}}) \ | \ 1 \le i \le r \}, \\
& \lambda = \max \{ \lambda_{x} (|x| - 1) \ | \ \sigma_{x}(1_{B}) \neq 0 \}
\end{align*}
where $|x|$ is the order of $x$. Therefore we prove Theorem \ref{Thm1.3}. This proof is inspired by Okuyama \cite{O}.

\begin{proof}[\textit{Proof of Theorem \ref{Thm1.3}}] We may assume that $D$ is not trivial. We follow two steps. \\

\textbf{Step 1} We show $LL({\bf Z}B) \le \lambda + 1$. \\

Let denote by $Z_{p'}$ the $F$-subspace of ${\bf Z}FG$ spanned by all $p$-regular section sums. For our claim above, it suffices to prove that $({\bf JZ}FG)^{\lambda} 1_{B} \subseteq Z_{p'}$ since $Z_{p'}$ is contained in the socle of $FG$ and thus ${\bf JZ}FG \cdot Z_{p'} = 0$. Take an element $a$ in the left side and write $a = \sum_{g \in G} a_{g}g$ where $a_{g} \in F$. We want to show that $a_{g} = a_{h}$ for all $g, h \in G$, if the $p$-regular parts of them are $G$-conjugate. However, since $a \in {\bf Z}FG$, we need only see that $a_{xy} = a_{y}$ for all $p$-elements $x$ and $p$-regular elements $y$ in $G$ with $xy = yx$. We first suppose $\sigma_{x} (1_{B}) = 0$. Then $\sigma_{x} (a) = 0$ and thus $a_{xy} = a_{y} = 0$ as required. So we may assume that $\sigma_{x} (1_{B}) \neq 0$. Therefore we have 
\begin{align*}
\sigma_{x} (({\bf JZ}FG)^{\lambda} 1_{B}) & \subseteq \sum_{1 \le i \le r} ({\bf JZ}FC_{G}(x))^{\lambda} 1_{b_{i}} \\
    & = \sum_{1 \le i \le r} ({\bf JZ}b_{i})^{\lambda} \\
    & \subseteq \sum_{1 \le i \le r} ({\bf JZ}b_{i})^{\lambda_{x} (|x| - 1)} 
\end{align*}
On the other hand, for each $1 \le i \le r$, $\tau_{x} (({\bf JZ}b_{i})^{\lambda_{x}}) \subseteq ({\bf JZ}\bar{b_{i}})^{\lambda_{x}} = 0$ and hence $({\bf JZ}b_{i})^{\lambda_{x}} \subseteq \kernel \tau_{x}$. 
Since $\kernel \tau_{x} = (x-1)FC_{G}(x)$, we conclude 
\begin{align*}
({\bf JZ}b_{i})^{\lambda_{x}(|x| - 1)} & \subseteq \{ (x-1)FC_{G}(x) \}^{|x| - 1} \\
& = (x-1)^{|x|-1} FC_{G}(x) \\
& = (1 + x + \dots + x^{|x|-1}) FC_{G}(x).
\end{align*}
 
 Thereby we have $\sigma_{x} (a) \in (1 + x + \dots + x^{|x|-1})FC_{G}(x)$, $x\sigma_{x}(a) = \sigma_{x}(a)$ and thus $a_{xy} = a_{y}$ as claimed. \\
 
 \textbf{Step 2} We show $\lambda + 1 \le p^{d} - p^{d - \varepsilon} + 1$. \\
 
 We fix a $p$-element $x$ in $G$ of order $p^{\varepsilon_{1}}$ and block $b$ of $FC_{G}(x)$ associated to $\lambda$. Namely, $\lambda = LL({\bf Z}\bar{b}) (p^{\varepsilon_{1}} - 1)$. We remark that $0 < \varepsilon_{1}$ when $D$ is not trivial. Let $D_{1}$ be a defect group of $b$ of order $p^{d_{1}}$. Then $D_{1}$ is contained in $D$ up to $G$-conjugate, $\varepsilon_{1} \le \varepsilon$ and we may assume the defect group of $\bar{b}$ is $\bar{D_{1}} = D_{1} / \langle x \rangle$ of order $p^{d_{1} - \varepsilon_{1}}$. Therefore we have 
 \begin{align*}
 LL({\bf Z}\bar{b}) (p^{\varepsilon_{1}} - 1) + 1 & \le p^{d_{1} - \varepsilon_{1}}(p^{\varepsilon_{1}} - 1) + 1 \\
                     & \le p^{d - \varepsilon_{1}}(p^{\varepsilon_{1}} - 1) + 1 \\
                     & = p^{d} - p^{d - \varepsilon_{1}} + 1\\
                     & \le p^{d} - p^{d - \varepsilon} + 1.
 \end{align*}
 The theorem is completely proved.
 
\end{proof}

\begin{cor} \label{Cor3.1} In the proof of Theorem \ref{Thm1.3}, if $LL({\bf Z}\bar{b}) (p^{\varepsilon_{1}} - 1) + 1 = p^{d} - p^{d - \varepsilon} + 1$, then $D \simeq C_{p^{\varepsilon}} \times C_{p^{d - \varepsilon}}$. In particular, if $LL({\bf Z}B) = p^{d} - p^{d - \varepsilon} + 1$, then $D \simeq C_{p^{\varepsilon}} \times C_{p^{d - \varepsilon}}$.
\end{cor}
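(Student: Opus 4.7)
The plan is to trace the equality case through the chain of four inequalities that appears at the end of Step~2 in the proof of Theorem~\ref{Thm1.3}. If the extreme terms agree, each bound in that chain must be tight, and this yields three simultaneous conditions: $LL({\bf Z}\bar{b}) = p^{d_{1} - \varepsilon_{1}}$, $d_{1} = d$, and $\varepsilon_{1} = \varepsilon$. Applying Okuyama's theorem (Theorem~\ref{Thm1.2}) to $\bar{b}$ then forces the defect group $\bar{D_{1}} = D_{1}/\langle x \rangle$ of $\bar{b}$ to be cyclic, of order $p^{d - \varepsilon}$. Since the defect groups of $b$ embed into $D$ up to $G$-conjugacy and $d_{1} = d$, we may identify $D_{1}$ with $D$; and because $b$ is a block of $FC_{G}(x)$, the element $x$ lies in the centre of every defect group of $b$, so $\langle x \rangle \le Z(D)$. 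In summary, $D$ is a $p$-group of order $p^{d}$ and exponent $p^{\varepsilon}$ containing a central cyclic subgroup $\langle x \rangle$ of order $p^{\varepsilon}$ whose quotient is cyclic of order $p^{d - \varepsilon}$.

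It remains to check that these properties force $D \simeq C_{p^{\varepsilon}} \times C_{p^{d - \varepsilon}}$. Picking a lift $y \in D$ of a generator of $D / \langle x \rangle$, we obtain $D = \langle x, y \rangle$, and since $x$ is central, $D$ is abelian. The structure theorem for finite abelian $p$-groups then expresses $D$ as a direct product of cyclic groups whose number equals the minimum number of generators of $D$, which is at most two. The largest cyclic factor has order $\exp(D) = p^{\varepsilon}$, and the remaining factor must have order $p^{d - \varepsilon}$ for cardinality reasons, yielding $D \simeq C_{p^{\varepsilon}} \times C_{p^{d - \varepsilon}}$ (with the second factor trivial when $\varepsilon = d$).

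Finally, the ``in particular'' clause follows immediately by combining Steps~1 and~2 of the proof of Theorem~\ref{Thm1.3}: one has $LL({\bf Z}B) \le \lambda + 1 \le p^{d} - p^{d - \varepsilon} + 1$, so equality of the outer terms forces equality throughout, and the first part of the corollary applies to the pair $(x, b)$ realising the maximum that defines $\lambda$. I expect the main technical point to be the bookkeeping that shows each of the four inequalities in Step~2 is tight under the hypothesis (in particular, relating the defect group $D_{1}$ of the subsection block $b$ with $D$ itself); beyond that, the closing group-theoretic argument is short.
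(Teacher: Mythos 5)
Your proposal is correct and follows essentially the same route as the paper: forcing equality through the chain in Step~2 to get $\varepsilon_{1}=\varepsilon$, $d_{1}=d$ and $LL({\bf Z}\bar{b})=p^{d-\varepsilon}$, invoking Theorem~\ref{Thm1.2} to make $\bar{D_{1}}$ cyclic, and using centrality of $x$ in $D_{1}$ to conclude $D\simeq D_{1}$ is abelian of the stated type. The only cosmetic difference is that the paper phrases the abelianness step via ``$D_{1}/Z(D_{1})$ cyclic implies $D_{1}$ abelian'' and splits off $\langle x\rangle$ as a direct factor, whereas you generate $D$ by $x$ and a lift of a generator of the quotient and then apply the structure theorem; these are interchangeable.
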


\begin{proof}
By the inequality in \textbf{Step 2} in the proof above, we have $\varepsilon_{1} = \varepsilon$ and $d_{1} = d$. Moreover $\bar{D_{1}} = D_{1} / \langle x \rangle$ is cyclic by Theorem \ref{Thm1.2}. Thus, since $\langle x \rangle$ is contained in the center $Z(D_{1})$ of $D_{1}$, $D_{1} /   Z(D_{1})$ is also cyclic. This implies $D_{1}$ is abelian. Thereby, we have $D \simeq D_{1} = \langle x \rangle \times H$ where $H \simeq D_{1} / \langle x \rangle$. 
\end{proof}

\section{Large Loewy lengths}
In this last section, we prove Theorem \ref{Thm1.4} and \ref{Thm1.5}. We remark that the notations given in the proof of Theorem \ref{Thm1.3} will be used throughout this section. 

\begin{proof}[\textit{Proof of Theorem \ref{Thm1.4}}] 
In case $D$ is cyclic, we obtain $D \simeq C_{3}$ and $e(B)=2$ by Proposition \ref{Prop2.3}. So we may assume $D$ is not cyclic and hence $\varepsilon < d$. Then, since $LL({\bf Z}B) = p^{d} - 1 \le p^{d} - p^{d - \varepsilon} + 1 < p^{d}$, we have $D \simeq C_{2} \times C_{2^{d-1}}$ by Corollary  \ref{Cor3.1}. Furthermore, $d=2$ and (2) holds by Proposition \ref{Prop2.5} as claimed. 

\end{proof}

\begin{proof}[\textit{Proof of Theorem \ref{Thm1.5}}]
As same reason to the proof of Theorem \ref{Thm1.4}, we may assume $D$ is not cyclic, $\varepsilon < d$ and $LL({\bf Z}B) = p^{d} - 2 \le LL({\bf Z}\bar{b})(p^{\varepsilon_{1}}-1)+1 \le p^{d} - p^{d - \varepsilon} + 1 \le p^{d} - 1$. \\

\textbf{Case 1} $LL({\bf Z}B) = p^{d} - 2 = p^{d} - p^{d - \varepsilon} + 1$. \\
By Corollary \ref{Cor3.1}, we have $D \simeq C_{3} \times C_{3^{d-1}}$. However, $LL({\bf Z}B) \neq p^{d} - 2$ in this case by Proposition \ref{Prop2.6}. \\

\textbf{Case 2} $LL({\bf Z}\bar{b})(p^{\varepsilon_{1}} - 1) + 1 = p^{d} - p^{d - \varepsilon} + 1 = p^{d} - 1$. \\
By Corollary \ref{Cor3.1}, $D \simeq C_{2} \times C_{2^{d-1}}$. Moreover, by Proposition \ref{Prop2.5}, 
(2) or (3) holds. \\

\textbf{Case 3} $LL({\bf Z}\bar{b})(p^{\varepsilon_{1}} - 1) + 1 = p^{d} - 2$ and $p^{d} - p^{d - \varepsilon} + 1 = p^{d} - 1$. \\

We obtain $p=2$, $d - \varepsilon = 1$ and $LL({\bf Z}\bar{b}) = \frac{2^d - 3}{2^{\varepsilon_{1}} - 1}$. Since $LL({\bf Z}\bar{b}) \le \bar{D_{1}} \le 2^{d- \varepsilon_{1}}$, $d- \varepsilon_{1}=1$ (remark that $0 < d - \varepsilon \le d - \varepsilon_{1}$) and so $LL({\bf Z}\bar{b}) = \frac{2^{\varepsilon_{1}+1} - 3}{2^{\varepsilon_{1}} - 1} = 1$ or $2$. Thus we have $\varepsilon_{1}=1$ and $d=2$. In this case, (2) or (3) holds by Proposition \ref{Prop2.5}.

\end{proof}

We omit the proof of Theorem \ref{Thm1.6} since we can prove it by similar way to Theorem \ref{Thm1.4} and \ref{Thm1.5}.

\subsection*{Acknowledgment}
The author would like to thank Shigeo Koshitani and Taro Sakurai for useful comments and helpful discussions. The author also would like to thank Benjamin Sambale for pointing out an error in Theorem \ref{Thm1.5} in the first version of this paper.

\end{document}